\newcommand{\phantomneg}{\phantom{-}}
\lstdefinelanguage{Julia}%
{morekeywords={abstract,break,case,catch,const,continue,do,else,elseif,end,export,false,for,function,immutable,import,importall,if,in,macro,module,otherwise,quote,return,switch,true,try,type,typealias,using,while},%
	sensitive=true,alsoother={$},morecomment=[l]\#,morecomment=[n]{\#=}{=\#},morestring=[s]{"}{"},morestring=[m]{'}{'},}[keywords,comments,strings]%
\bfseries\color{blue},
\crefname{hypothesis}{Hypothesis}{Hypotheses}
\definecolor{mylinkcolor}{RGB}{0,0,140}
\title{Computing tensor Z-eigenvectors \\ with dynamical systems}
\author{%
  Austin R.\ Benson\thanks{Department of Computer Science, Cornell University (\email{arb@cs.cornell.edu}).}
  \and David F.\ Gleich\thanks{Department of Computer Science, Purdue University (\email{dgleich@purdue.edu}).}
}
\begin{document}

\maketitle

\begin{abstract}
We present a new framework for computing Z-eigenvectors of general tensors
based on numerically integrating a dynamical system that can only converge to a 
Z-eigenvector. Our motivation comes from our recent research on spacey random walks,
where the long-term dynamics of a stochastic process are governed by a dynamical
system that must converge to a Z-eigenvector of a transition probability
tensor. Here, we apply the ideas more broadly to general tensors and find that our 
method can compute Z-eigenvectors that algebraic methods like the higher-order power 
method cannot compute.
\end{abstract}

\section{Preliminaries on Tensor eigenvectors}

Computing matrix eigenvalues is a classic problem in numerical linear algebra and scientific computing.
Given a square matrix $\mA$, the goal is to find a vector-scalar pair $(\vx, \lambda)$ with $\vx \neq 0$ satisfying
\begin{equation}\label{eqn:matrix_evec}
\mA \vx = \lambda \vx.
\end{equation}
The pair $(\vx, \lambda)$ is called the eigenpair, $\vx$ the eigenvector, and $\lambda$ the eigenvalue.
After several decades of research and development, we have, by and large, reliable
methods and software for computing \emph{all} eigenpairs of a given matrix $\mA$. 
(Experts will, of course, be aware of exceptions, but we hope they would agree with the general sentiment of the statement.) 

In numerical \emph{multilinear} algebra, there are analogous eigenvector problems (note the plurality).
For example, given a three-mode cubic tensor $\cmT$ (here meaning that $\cmT$ is a multi-dimensional
$n \times n \times n$ array with entries $\cel{T}_{i,j,k}$, $1 \le i, j, k, \le n$%
\footnote{In true pure mathematical parlance, this is the definition of a hypermatrix, not a tensor
(see Lim~\cite{Lim-2014-tensors} for the precise definitions.)
However, ``tensor'' has become synonymous with a multi-dimensional array of numbers~\cite{Kolda-2009-tensor-decompositions}, and we adopt this terminology here.}), the two most common
tensor eigenvector problems are:
\[ \begin{array}{c@{\qquad\quad}c} 
\text{$Z$-eigenvectors~\cite{Qi-2005-Z-eigenvalues}} & \text{$H$-eigenvectors~\cite{Qi-2005-Z-eigenvalues}} \\
\text{$l^2$-eigenvectors~\cite{Lim-2005-eigenvalues}}& \text{$l^k$-eigenvectors~\cite{Lim-2005-eigenvalues}} \\
\sum_{jk} \cel{T}_{i,j,k} x_j x_k = \lambda x_i,\; 1 \le i \le n & \sum_{jk} \cel{T}_{i,j,k} x_j x_k = \lambda x_i^2,\; 1 \le i \le n \\
\normof[2]{\vx} = 1 & \vx \neq 0 \\
\end{array} \]

We use the ``$Z$''  and ``$H$''  terminology instead of ``$l^2$'' and ``$l^k$''.
Both $Z$- and $H$-eigenvectors are defined for tensors with the dimension equal in all modes
(such a tensor is called \emph{cubic}~\cite{Comon-2008-symmetric}).
The definitions can be derived by showing that the eigenpairs are KKT points
for a generalization of a Rayleigh quotient to
tensors~\cite{Lim-2005-eigenvalues}. 
One key difference between the types is
that $H$-eigenvectors are scale-invariant, 
while $Z$-eigenvectors are not---this is why we put a norm constraint on the vector.
Specifically, if we ignore the norm constraint and scale $\vx$ by a constant,
the corresponding $Z$-eigenvalue would change; 
for $H$-eigenpairs, this is not the case.
If $\cmT$ is symmetric, then it has a finite set of $Z$-eigenvalues and moreover, there
must be a real eigenpair when the order of the tensor (i.e., the number of modes or indices)
is odd~\cite{Cartwright-2013-number}.

This paper presents a new framework for computing $Z$-eigenpairs.
Tensor $Z$-eigenvectors show up in a variety of applications, including
evolutionary biology~\cite{Bini-2011-quadratic,Meini-2011-Perron},
low-rank factorizations and compression~\cite{DeLathauwer-2000-best,Kofidis-2002-best,Anandkumar-2014-tensor-decomp},
signal processing~\cite{DeLathauwer-1997-signal,Kofidis-2001-tensor},
quantum geometry~\cite{Wei-2003-entanglement,Hu-2016-entanglement},
medical imaging~\cite{Qi-2008-kurtosis}, and
data mining~\cite{Benson-2015-tensor,Gleich-2015-mlpr,Wu-2016-tensor-co-clustering,Benson-2018-hypergraphs}.
All real eigenpairs can be computed with a Lassere type semidefinite 
programming hierarchy~\cite{Cui-2014-real,Nie-2014-sdp,Nie-2017-eigenvalues}, 
but the problem of computing them remains NP-hard~\cite{Hillar-2013-NP-hard} and
the scalability of such methods is limited (see \cref{sec:scalability} for experiments).
Thus, we still lack robust and scalable general-purpose methods for
computing these eigenvectors. 

We introduce two special cases of tensor contractions that will be useful:
\begin{enumerate}
\item The \emph{tensor apply} takes a cubic tensor and a vector and produces a vector, akin to Qi's notation \cite{Qi-2005-Z-eigenvalues}:
\[
\begin{array}{@{\quad} l l l}
\text{three-mode tensor} & \vy = \cmT \vx^2 & y_i =  \sum_{j,k}\cel{T}_{i,j,k} x_j x_k  \\
\text{$m$-mode tensor} & \vy = \cmT \vx^{m-1} & y_i =  \sum_{i_2, \ldots, i_n}\cel{T}_{i_1, \ldots, i_m} x_{i_2} \cdots x_{i_m}
\end{array}
\]
\item The \emph{tensor collapse} takes a cubic tensor and a vector and produces a matrix:
\[
\begin{array}{@{\quad} l l l}
\text{three-mode tensor} & \mY = \cmT[\vx] & \mY = \sum_{k} \cel{T}_{:,:,k}x_k \\
& & Y_{ij} = \sum_{k} \cel{T}_{i,j,k}x_k \\
\text{$m$-mode tensor} & \mY = \cmT[\vx]^{m-2} & \mY = \sum_{i_3,\ldots,i_m} \cel{T}_{:,:,i_3, \ldots i_m}x_{i_3} \cdots x_{i_m} \\
& & Y_{ij} = \sum_{i_3,\ldots,i_m} \cel{T}_{i,j,i_3, \ldots, i_m}x_{i_3} \cdots x_{i_m}.
\end{array}
\]
\end{enumerate}
For the tensor collapse operator, the ``:'' symbol signifies taking all entries along that index, 
so $\cel{T}_{:,:, k}$ is a square matrix.
The tensor may not be symmetric, but we are always contracting onto
the first mode (tensor apply) or first and second modes (tensor collapse);
we assume that $\cmT$ has been permuted in the appropriate manner for the problem at hand.
With this notation, the $Z$-eigenvector problem can be written as
\begin{equation}
\cmT \vx^{m-1} = \lambda \vx,\;\;\; \normof[2]{\vx} = 1.
\end{equation}

The crux of our computational method is based on the following observation
that relates tensor and matrix eigenvectors.
\begin{observation}\label{obs:main}
A \emph{tensor $Z$-eigenvector} $\vx$ of an $m$-mode tensor must be a
\emph{matrix eigenvector} of the collapsed matrix $\cmT[\vx]^{m-2}$, i.e.,
\begin{equation}
\cmT \vx^{m-1} = \lambda \vx \iff \cmT[\vx]^{m-2} \vx = \lambda \vx.
\end{equation}
\end{observation}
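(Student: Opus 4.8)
The plan is to establish the pointwise identity
\begin{equation*}
\cmT \vx^{m-1} = \cmT[\vx]^{m-2}\,\vx
\end{equation*}
for every $\vx$, after which the stated biconditional is immediate. First I would expand the right-hand side entrywise using the two definitions from the preliminaries. Writing $\mY = \cmT[\vx]^{m-2}$, the collapse contracts modes $3$ through $m$ of $\cmT$ against $\vx$, so $Y_{i_1 i_2} = \sum_{i_3,\ldots,i_m} \cel{T}_{i_1,i_2,i_3,\ldots,i_m}\, x_{i_3}\cdots x_{i_m}$. The subsequent matrix-vector product then contracts the second mode against $\vx$ as well, giving $(\mY\vx)_{i_1} = \sum_{i_2} Y_{i_1 i_2}\, x_{i_2} = \sum_{i_2,\ldots,i_m} \cel{T}_{i_1,\ldots,i_m}\, x_{i_2}\cdots x_{i_m}$. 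Carrying out this index bookkeeping shows the net effect is to contract all of modes $2$ through $m$ against $\vx$, which is exactly the definition of the tensor apply $(\cmT\vx^{m-1})_{i_1}$. Hence the two expressions agree componentwise.

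Given the identity, the equivalence needs no separate forward and backward arguments: the equations $\cmT\vx^{m-1} = \lambda\vx$ and $\cmT[\vx]^{m-2}\vx = \lambda\vx$ have identical left-hand sides, so a pair $(\vx,\lambda)$ satisfies one precisely when it satisfies the other.

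I do not expect a genuine obstacle here, since the argument amounts to reindexing a single sum; the only point requiring care is conceptual rather than technical. The collapsed matrix $\cmT[\vx]^{m-2}$ itself depends on $\vx$, so the right-hand condition is \emph{not} an ordinary eigenvalue problem for a fixed matrix but a self-consistency requirement: $\vx$ must be an eigenvector of the very matrix it generates. This is exactly the reinterpretation the paper needs, recasting the $Z$-eigenvector equation as a fixed-point-style eigenvector condition and thereby motivating the dynamical-systems method that follows.
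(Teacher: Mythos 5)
Your proposal is correct and matches the paper's (implicit) justification: the observation follows directly from the definitions of the tensor apply and tensor collapse, since contracting modes $3$ through $m$ of $\cmT$ against $\vx$ and then multiplying the resulting matrix by $\vx$ contracts modes $2$ through $m$, yielding $\cmT[\vx]^{m-2}\vx = \cmT\vx^{m-1}$ identically. Your closing remark that the matrix depends on $\vx$, so this is a self-consistency condition rather than a fixed eigenproblem, is exactly the caveat the paper itself emphasizes immediately after the observation.
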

The catch, of course, is that the matrix itself depends on the tensor
eigenvector we want to compute, which we do not know beforehand. Therefore, we still have a nonlinear problem.
Although we have used this dynamical system for computing eigenvectors in
prior work~\cite{Benson-2017-srw}, this is the first time that the connection
with the matrix eigenproblem has been used for algorithm design.

\section{A dynamical systems framework for computing $Z$-eigenvectors}

\Cref{obs:main} provides a new perspective on the tensor $Z$-eigenvector problem.
Specifically, tensor $Z$-eigenvectors are matrix eigenvectors,
just for some unknown matrix.
Our computational approach is based on the following continuous-time dynamical system,
for reasons that we will make clear in \Cref{sec:srw}:
\begin{equation}\label{eqn:dynsys}
\frac{d\vx}{dt} = \Lambda(\cmT[\vx]^{m-2}) - \vx.
\end{equation}
Here, $\Lambda$ is some fixed map that takes as input a matrix and 
produces as output some prescribed eigenvector of the matrix with unit norm.
For example, on an input $\mM$, $\Lambda$ could be defined to compute several objects:
\begin{enumerate}
\item the eigenvector of $\mM$ with $k$th smallest/largest magnitude eigenvalue
\item the eigenvector of $\mM$ with $k$th smallest/largest algebraic eigenvalue
\item the eigenvector of $\mM$ closest in distance to a given vector $\vv$.
\end{enumerate}
We resolve the ambiguity in the sign of the eigenvector by picking 
the sign based on the first element. In the case of multiple eigenvectors sharing 
an eigenvalue, we propose using the closest eigenvector to $\vx$, although we
have not evaluated this technique.

\begin{figure}[tb]
\centering
\lstset{%
language         = Julia,
basicstyle       = \footnotesize \ttfamily,
keywordstyle     = \bfseries\color{blue},
stringstyle      = \color{magenta},                                                                 
commentstyle     = \color{teal},
showstringspaces = false,
numbers=left
}
\scalebox{0.9}{\lstinputlisting{algorithm.jl}}
\vspace{-0.25cm}
\caption{Julia implementation of the dynamical system for a
    3-mode tensor with a map $\Lambda$ that picks
    the largest magnitude real eigenvalue and numerical integration with the forward Euler method.
    Code snippet is available at \url{https://gist.github.com/arbenson/f28d1b2de9aa72882735e1be24d05a7f}.
    A more expansive code is available at \url{https://github.com/arbenson/TZE-dynsys}.}
\label{fig:julia_FE}
\end{figure}

\begin{proposition}\label{prop:dynsys_conv}
Let $\Lambda$ be a prescribed map from a matrix to one of its eigenvectors.
Then if the dynamical system in \cref{eqn:dynsys} converges to a non-zero
solution, it must converge to a tensor $Z$-eigenvector.
\end{proposition}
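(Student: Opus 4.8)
The plan is to show that any finite limit of a convergent trajectory of \cref{eqn:dynsys} must be an equilibrium of that system, and that every nonzero equilibrium is a tensor $Z$-eigenvector by \cref{obs:main}. Write the vector field as $F(\vx) = \Lambda(\cmT[\vx]^{m-2}) - \vx$, so that the dynamical system reads $d\vx/dt = F(\vx(t))$, and suppose the solution converges to some $\vx^\ast \neq 0$ as $t \to \infty$.

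The first step is to argue that $\vx^\ast$ is an equilibrium, i.e.\ $F(\vx^\ast) = 0$. Assuming $\Lambda$ is continuous in a neighborhood of $\vx^\ast$, the unit vector $\Lambda(\cmT[\vx(t)]^{m-2})$ converges to $\Lambda(\cmT[\vx^\ast]^{m-2})$, so $\vx'(t) = F(\vx(t))$ converges to $F(\vx^\ast)$. I would then invoke the standard real-analysis fact that if a differentiable curve converges to a finite limit and its derivative also converges, the derivative must converge to $0$; otherwise the trajectory would eventually move at a rate bounded away from zero and could not settle. Hence $F(\vx^\ast) = 0$, that is, $\Lambda(\cmT[\vx^\ast]^{m-2}) = \vx^\ast$.

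The second step is to unpack this fixed-point condition. By definition $\Lambda$ returns a unit-norm eigenvector of its matrix argument, so $\vx^\ast = \Lambda(\cmT[\vx^\ast]^{m-2})$ is a unit-norm eigenvector of the collapsed matrix $\cmT[\vx^\ast]^{m-2}$: there is a scalar $\lambda$ with $\cmT[\vx^\ast]^{m-2}\vx^\ast = \lambda \vx^\ast$ and $\normof[2]{\vx^\ast} = 1$. Applying the equivalence of \cref{obs:main} in the direction $\cmT[\vx^\ast]^{m-2}\vx^\ast = \lambda \vx^\ast \implies \cmT (\vx^\ast)^{m-1} = \lambda \vx^\ast$ then yields exactly the tensor $Z$-eigenvector conditions.

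The main obstacle is the continuity of $\Lambda$ invoked in the first step. As a map from a matrix to a single selected eigenvector, $\Lambda$ is only piecewise continuous: it can jump where the targeted eigenvalue collides with another (so the selected branch swaps or becomes ambiguous), and the sign convention based on the first element can flip. The passage $F(\vx(t)) \to F(\vx^\ast)$ is therefore justified only when the eigenvalue selected at $\cmT[\vx^\ast]^{m-2}$ is simple and isolated, making $\Lambda$ continuous near $\vx^\ast$. I would handle this either by adding such a nondegeneracy condition as a hypothesis, or by observing that genuine convergence of the trajectory already forces $\Lambda$ to behave consistently along its tail, which suffices to take the limit. (The nonzero hypothesis is mild: since $\Lambda$ always outputs a unit vector, a limit of the form $\Lambda(\cdot)$ automatically has unit norm, so the only role of the hypothesis is to exclude the degenerate point $\vx = 0$, where $\cmT[\vx]^{m-2}$ is the zero matrix and $\Lambda$ is ill-defined.)
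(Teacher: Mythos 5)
Your proof is correct and takes essentially the same route as the paper's: the limit of a convergent trajectory is a stationary point, the fixed-point condition $\Lambda(\cmT[\vx]^{m-2}) = \vx$ makes $\vx$ a unit-norm eigenvector of the collapsed matrix, and the equivalence in \cref{obs:main} converts this to the tensor $Z$-eigenvector equation. The only difference is that you rigorously fill in the convergence-implies-stationarity step and flag the piecewise continuity of $\Lambda$, both of which the paper's proof simply asserts or elides.
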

\begin{proof}
If the dynamical system converges, then it converges to a stationary point.
Any stationary point has zero derivative, so
\begin{align*}
\frac{d\vx}{dt} = 0 
 \iff \Lambda(\cmT[\vx]^{m-2}) = \vx 
& \iff \cmT[\vx]^{m-2}\vx = \lambda\vx \text{ for some $\lambda$ that depends on $\Lambda$} \\
& \iff \cmT\vx^{m-1} = \lambda \vx.
\end{align*}
\end{proof}

One must be a bit careful with the input and output values of $\Lambda$.
If $\cmT$ is not symmetric, then $\cmT[\vx]^{m-2}$ might not be diagonalizable, and
we may have to deal with complex eigenvalues. To keep the dynamical system real-valued,
one could always modify the map $\Lambda$ to output the real part.
However, the tensor need not be symmetric (nor $\cmT[\vx]^{m-2}$ normal for all $\vx$) for
the dynamical system to maintain real values.
In fact, our motivation for this dynamical system comes from a tensor that is not necessarily symmetric,
which we will discuss in \Cref{sec:srw}.

\Cref{prop:dynsys_conv} leads to a broad framework for computing $Z$-eigenvectors:
\begin{enumerate}
\item Choose a map $\Lambda$ and a numerical integration scheme.
\item Numerically integrate \cref{eqn:dynsys}.
\end{enumerate}
Different choices of $\Lambda$ may converge to different $Z$-eigenvectors
and different numerical integration schemes may lead to different convergence properties.
\Cref{fig:julia_FE} shows a concrete example, where $\Lambda$ picks the eigenvector
corresponding to eigenvalue with largest magnitude real part, along
with the forward Euler numerical integration scheme.

The dynamical system in \cref{eqn:dynsys} has no dependence on the time $t$.
Thus, the system might be a good candidate for an explicit solution; however,
this would require integrating the map $\Lambda$, for which an explicit solution 
is unclear in general. Thus, we focus on numerical integration.

\subsection{Forward Euler and diagonal tensors}

As an illustrative example, we consider
the special case of using the forward Euler
numerical integration scheme for computing the tensor eigenvalues of an $n$-dimensional,
$m$-mode diagonal tensor $\cmT$.
Without loss of generality, assume that the diagonal entries of $\cmT$ are decreasing in order 
so that  $\cT_{i, \ldots, i} < \cT_{j, \ldots, j}$ if $i > j$.
This tensor has at least $n$ $Z$-eigenpairs:\ $(\ve_i, \cT_{i, \ldots, i})$ for $1 \le i \le n$,
where $\ve_i$ is the $i$th standard basis vector.
Suppose that we want to compute the $i$th eigenvector and set $\Lambda$ to 
select the unit-norm eigenvector closest to $\ve_i$ in angle.
Since $\cmT[\vx]^{m-2}$ is diagonal, its eigenvectors are the standard basis vectors, and
$\Lambda(\cmT[\vx]^{m-2}) = \ve_i$.
Let $\vr_{k} = \vx_{k} - \ve_i$ be the residual at the $k$th iteration.
If the step size is $h$, then
\begin{align*}
\| \vr_{k+1} \| = \| \vx_{k+1} - \ve_i\| &= \| \vx_k + h(\ve_i - \vx_k) - \ve_i\| \\
&= (1 - h)\| \vx_k - \ve_i \| = (1 - h)\| \vr_{k} \| = (1 - h)^k\| \vr_{0} \|.
\end{align*}
Thus, the forward Euler scheme converges if $h \le 1$ and converges in one step if $h = 1$.
\Cref{fig:dynamics} (left) illustrates the dynamics for an example tensor $\cmT$ ($n = 3$, $m = 3$)
with $\cT_{1,1,1} = 5$, $\cT_{2,2,2} = 2$, and $\cT_{1,1,1} = 1$. 
In this case, the entire surface of the three-dimensional sphere
is a basin of attraction for this eigenvector, which is consistent with our convergence analysis.
In fact, for this specific case, we have the closed form solution 
\begin{align*}
 \vx(t) = e^{-t} [\vx(0) - \ve_i] + \ve_i,
\end{align*}
and we have exponential convergence to a solution, consistent with \Cref{fig:dynamics} (left).

\begin{figure}[tb]
\includegraphics[width=0.495\textwidth]{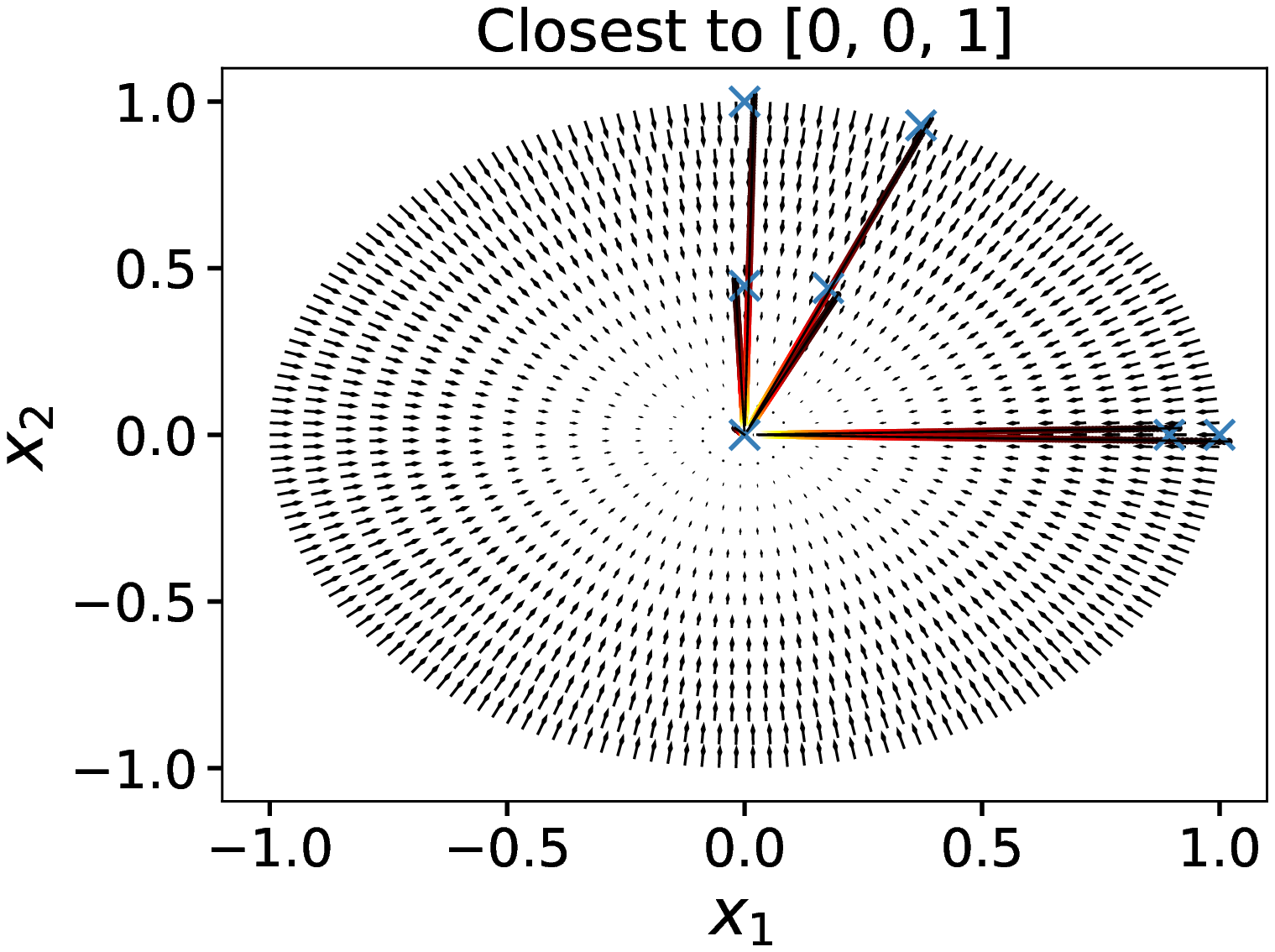}
\includegraphics[width=0.495\textwidth]{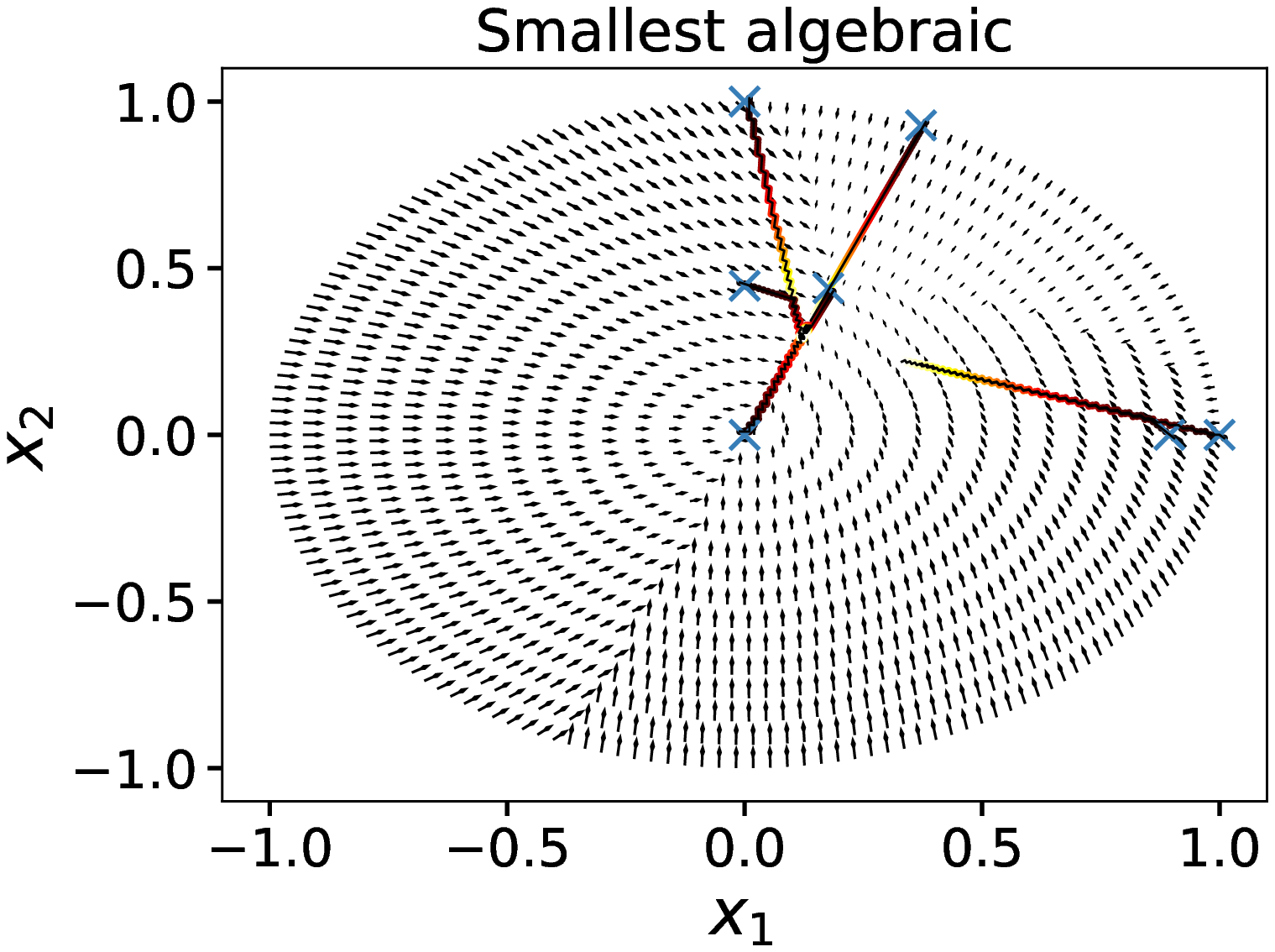}
\vspace{-9mm}
\caption{%
Vector field of dynamical systems from \cref{eqn:dynsys} of points on the
surface of the three-dimensional unit sphere for a three-dimensional,
third-order diagonal tensor with entries $5$, $2$, and $1$ along the diagonal.
Fixed points are marked with a blue `X'.  Sample trajectories of the dynamical
system are illustrated with darker red points corresponding to earlier time and
lighter yellow corresponding to later time (computed with forward Euler with step size 0.01).
\textbf{(Left)} When the map $\Lambda$ is the eigenvector closest in angle to
the third standard basis vector, the entire surface is a basin of attraction for
this vector, which is a $Z$-eigenvector of the tensor. The trajectories head towards
this eigenvector.
\textbf{(Right)} When the map $\Lambda$ is the eigenvector corresponding to the
smallest algebraic eigenvalue, the $Z$-eigenvector corresponding to third standard basis
vector has no basin of attraction. The map is undefined at the
attracting eigenvector where $(x_1, x_2) \approx (0.18, 0.44)$ because 
the eigenspace corresponding to the smallest algebraic eigenvalue has dimension greater than one.
}
\label{fig:dynamics}
\end{figure}

However, our analysis relies on the particular choice of $\Lambda$.
Suppose instead that we choose $\Lambda$ to select the eigenvector corresponding to 
the smallest algebraic eigenvalue, and we are trying to compute the eigenvector $\ve_3$
of a $3 \times 3 \times 3$ diagonal tensor $\cmT$ with strictly decreasing diagonal entries.
Moreover, suppose we have a starting iterate 
$\vx_0 = \bmat{\eps/2 & \eps/2 & 1 - \eps}^T$,
which is close to the $Z$-eigenvector $\ve_3$.
Then
\begin{align*}
\Lambda(\cmT[\vx_0]) = \Lambda\left(\bmat{
\frac{\eps}{2} \cT_{1,1,1} & 0 & 0 \\
0 & \frac{\eps}{2} \cT_{2,2,2} & 0 \\
0 & 0 & (1 - \eps) \cT_{3,3,3}
}\right) = \ve_2,
\end{align*}
if $\eps$ is sufficiently small. Forward Euler integration with step size $h$ gives the next iterate
$\vx_1 = \vx_0 + h(\Lambda(\cmT[\vx_0]) - \vx_0) 
= \bmat{(1 - h)\eps & (1 - h)\eps + h & 1 - \eps - h}^T$,
which is further away from the the $Z$-eigenvector $\ve_3$ than $\vx_0$.
Thus, there is no basin of attraction for the $Z$-eigenvector $\ve_3$
with this particular choice of map $\Lambda$.

It turns out that this is a case where the dynamical system does not converge.
The system is ill-defined for some points and moreover, these points are attractors
(specifically, the eigenvector $(x_1, x_2, x_3) \approx (0.18, 0.44, 0.88)$ in the example above; see \Cref{fig:dynamics}, right).
In general, for some time, the dynamical system will evolve in the direction of $\ve_i$, where $\cT_{i,i,i}x_i < \min_{j}\cT_{j,j,j}x_j$
for $i \neq j$, $i,j \in \{1,2,3\}$. Along this direction, the $i$th coordinate of the vector
increases until $\cT_{i,i,i}x_i = \cT_{j,j,j}x_j$ for some $j \neq i$.
At this point, the map is ill-defined, since the eigenspace corresponding to the
smallest eigenvalue of $\cmT[\vx]$ has dimension at least two.
Since the diagonal tensor entries are distinct by assumption, this is not a fixed point.
We can disambiguate the map at these ambiguous points.
However, any way of doing so besides artificially mapping the vector to a $Z$-eigenvector of $\cmT$, would result in immediate
attraction back to one of these ambiguous points.

\subsection{Spacey random walks motivation for the dynamical system}\label{sec:srw}

The motivation for the dynamical system comes from our previous analysis of a
stochastic process known as the ``spacey random walk'' that relates tensor
eigenvectors of a particular class of tensors to a stochastic
process~\cite{Benson-2017-srw}.  Specifically, the class of tensors are
irreducible \emph{transition probability tensors} (any irreducible tensor $\cmP$
with $\sum_{i_1=1}^{n}\cP_{i_1,i_2,\ldots,i_m} = 1$ for $1 \le i_2, \ldots, i_m \le n$).
For simplicity, we discuss a three-mode transition probability tensor
$\cmP$, where the entries can be interpreted as coming from a second-order
Markov chain---the entry $\cP_{i,j,k}$ is the probability of transitioning to
state $i$ given that the last two states were $j$ and $k$. Due to the theory of
Li and Ng~\cite{Li-2013-tensor-markov-chain}, there exists a tensor $Z$-eigenvector $\vx$
with eigenvalue $1$ satisfying
\begin{equation}\label{eqn:spacey_evec}                                                                                                                                       
\textstyle \cmP\vx^2 = \vx, \quad \sum_{i=1}^{n} x_{i} = 1, \quad x_{i} \ge 0.                                                                                           
\end{equation}

The vector $\vx$ is stochastic, but it does \emph{not} represent the                                                                                                             
stationary distribution of a Markov chain. Instead, we showed that $\vx$ is the                                                                                               
limiting distribution of a non-Markovian, generalized vertex-reinforced                                                                                                     
random walk~\cite{Benaim-1997-vrrw} that we called the 
spacey random walk~\cite{Benson-2017-srw}. 
In the $n$th step of a spacey random walk,                                                                                                       
after the process has visited states $X_1, \ldots, X_{n}$, 
it \emph{spaces out} and forgets its second last state (that is, the                                                                                    
state $X_{n-1}$). It then invents a new history state $Y_{n}$ by randomly                                                                                                     
drawing a past state $X_{1}, \ldots, X_{n}$. Finally, it transitions to                                                                                                       
$X_{n+1}$ via the second-order Markov chain represented by $\cmP$ as if its last                                                                                               
two states were $X_{n}$ and $Y_{n}$, i.e., it transitions to $X_{n+1}$ with                                                                                                   
probability $\cP_{X_{n+1}, X_n, Y_n}$. (In contrast, a true second-order Markov chain                                                                                         
would transition with probability $\cP_{X_{n+1}, X_n, X_{n-1}}$.)

Using results from Bena\"{i}m~\cite{Benaim-1997-vrrw}, we showed that 
the long-term dynamics of the spacey random walk for an $m$-mode 
transition probability tensor are governed by the following dynamical 
system~\cite{Benson-2017-srw}:
\begin{equation}\label{eqn:srw_dynsys}
\frac{d\vx}{dt} = \Pi(\cmP[\vx]^{m-2}) - \vx,
\end{equation}
where $\Pi$ is a map that takes a column-stochastic transition matrix and maps it to 
the Perron vector of the matrix.
In other words, if the spacey random walk converges, it must converge to an attractor
of the dynamical system in \cref{eqn:srw_dynsys}.
The dynamical system in \cref{eqn:srw_dynsys} is a special case of the more general
system in \cref{eqn:dynsys}, where the map $\Lambda$ picks
the eigenvector with largest algebraic eigenvalue (the Perron vector), and the tensor
has certain structural properties (it is an irreducible transition probability tensor).

To summarize, our prior work studied a specific case of the general 
dynamical system in \cref{eqn:dynsys} to understand the stochastic 
process behind principal $Z$-eigenvectors of transition probability tensors.
The general dynamical system provides a new framework for computing
general tensor eigenvectors---\emph{if} the dynamical system in \cref{eqn:dynsys}
converges, then it converges to a tensor $Z$-eigenvector.
The dynamical system may not have an attractor~\cite{Peterson-2018-comms}, 
but it usually does in practice (see \cref{sec:numerical}).

\subsection{Relationship to the Perron iteration}
Bini, Meini, and Poloni derived a Perron iteration to compute
the minimal nonnegative solution of the equation
\begin{equation}\label{eqn:QVE}
\vx = \va + \cmB\vx^2,
\end{equation}
where $\va$ and $\cmB$ are nonnegative
and the all-ones vector $\ve$ is a (non-minimal) nonnegative 
solution~\cite{Bini-2011-quadratic,Meini-2011-Perron}.
The Perron iteration for computing the minimal nonnegative solution is
\begin{equation}\label{eqn:PI}
\vx_{k+1} = \Pi[\mF + \cmB[\ve] - \cmB[\vx_k]],
\end{equation}
where $\mF = \sum_{j} \cmB_{:,j,:}$, $\ve$ is the vector of all ones,
and $\Pi$ maps a nonnegative matrix to its Perron vector with unit $1$-norm.
Suppose that $\vx_{0} \ge 0$ and $\ve^T\vx_{0} = 1$.
Then every iterate $\vx_k$ is stochastic and 
\begin{equation}
\mF = \cmW[\vx_k],\; \cW_{i,j,\ell} = F_{i,j},\qquad \cmB[\ve] = \cmZ[\vx_k],\; \cZ_{i,j,\ell} = [\cmB[\ve]]_{i,j}.
\end{equation}
Thus, we can re-write the Perron iteration in \cref{eqn:PI} as
$\vx_{k+1} = \Pi(\cmT[\vx_k])$, where $\cmT = \cmW + \cmZ - \cmB$.
These iterates are equivalent to forward Euler integration of the dynamical
system in \cref{eqn:dynsys} with unit step size and eigenvector map $\Lambda = \Pi$.

Meini and Poloni~\cite{Meini-2017-Perron} derived a similar Perron iteration for the solution to \cref{eqn:spacey_evec}
for the case of a $3$-mode transition probability tensor.
The algorithm first computes a minimal sub-stochastic nonnegative vector $\vm$
satisfying $\vm = \cmP\vm^{2}$ using a Newton method.
The Perron iteration for the transition probability tensor is then
$\vx_{k+1} = \Pi(\cmT[\vx_k])$ for $\cT_{i,j,\ell} = \cP_{i,j,\ell}(1 + m_j + m_{\ell})$.
The iterates are again equivalent
to forward Euler integration of \cref{eqn:dynsys} with unit step size.

\subsection{Relationship to the shifted higher-order power method}

The shifted higher-order power method~\cite{Kolda-2011-sshopm}
can be derived by noticing that
\begin{equation}
(1 + \gamma)\lambda \vx = \cmT \vx^{m-1} + \gamma\lambda \vx
\end{equation}
for any eigenpair. This yields the iteration
\begin{equation}\label{eqn:shifted_hopm}
\vx_{k+1} = \frac{\frac{1}{ 1 + \gamma}\left(\cmT \vx_k^{m-1} + \gamma\vx_k\right)}{\| \frac{1}{ 1 + \gamma}\left(\cmT \vx_k^{m-1} + \gamma\vx_k\right) \|_2}
\end{equation}
for any shift parameter $\gamma$
(the case where $\gamma = 0$ is just the classical
``higher-order power method''~\cite{DeLathauwer-2000-best,Regalia-2000-hopm,Kofidis-2002-best}).
Kolda and Mayo showed that when $\cmT$ is symmetric, 
the iterates in \cref{eqn:shifted_hopm} converge monotonically
to a tensor eigenvector given an appropriate shift $\gamma$.

If $\cmT = \cmP$ for some transition probability tensor $\cmP$
and we are interested in the case when $\lambda = 1$ and we 
normalize via $\normof[1]{\vx} = 1$,
then one can also derive these iterates by the dynamical system
\begin{equation}\label{eqn:shifted_hopm_tpt_dynsys}
\frac{d\vx}{dt} = \cmP \vx^{m-1} - \vx
\end{equation}
(c.f.~\cref{eqn:srw_dynsys}).
If this dynamical system converges ($d\vx/dt = 0$), then
$\vx = \cmP \vx^{m-1}$,
and $\vx$ is a tensor $Z$-eigenvector with eigenvalue $1$.
If we numerically integrate \cref{eqn:shifted_hopm_tpt_dynsys} using the forward Euler method 
with step size $h = 1 / (1 + \gamma)$ 
and any starting vector $\vx_0$ satisfying $\vx_0 \ge 0$ and $\| \vx_0 \|_1 = 1$, 
then the iterates are
\begin{align}
\vx_{k+1} &= \vx_k + \frac{1}{1 + \gamma}\left(\cmP\vx_k^{m-1} - \vx_k\right) \\
&= \frac{1}{1 + \gamma}\left(\cmP\vx_k^{m-1} + \gamma\vx_k\right) 
= \frac{\frac{1}{1 + \gamma}\left(\cmP\vx_k^{m-1} + \gamma\vx_k\right)}{\| \frac{1}{1 + \gamma}\left(\cmP\vx_k^{m-1} + \gamma\vx_k\right)\|_1},
\end{align}
which are the same as the shifted higher-order power method iterates in \cref{eqn:shifted_hopm}.
The last equality follows from the fact that $\| \vx_k \|_1 = 1$ and $\vx_k \ge 0$,
which is true by a simple induction argument:
the base case holds by the initial conditions and
\begin{equation}
\| \cmP\vx_k^{m-1} + \gamma\vx_k \|_1 = \| \cmP\vx_k^{m-1} \|_1 + \gamma = 1 + \gamma
\end{equation}
since $\cmP\vx_k^{m-1}$ and $\vx_k$ are both stochastic vectors.

With a general tensor $\cmT$, we can either enforce normalization by evolving the dynamical system
over, say, a unit sphere, or we can let the vector $\vx$ be unnormalized. 
The latter case gives a more direct connection to SS-HOPM.
In this case, any vector $\vx$ where $\cmT \vx^{m-1} = \normof[2]{\vx} \vx$ is
a tensor $Z$-eigenvector. This leads to the following dynamical system:
\begin{equation}\label{eqn:shifted_hopm_dynsys}
\frac{d\vx}{dt} = \cmT \vx^{m-1} - \normof[2]{\vx}\vx.
\end{equation}
If $d\vx / dt = 0$, then $\normof[2]{\vx}\vx = \cmT \vx^{m-1}$,
so $\vx$ is a $Z$-eigenvector of $\cmT$ with eigenvalue $\normof[2]{\vx}$.
Now suppose that we numerically integrate the dynamical system in \cref{eqn:shifted_hopm_dynsys} by
\begin{enumerate}
\item taking a forward Euler step to produce the iterate $\vx_{k+1}'$; and
\item projecting $\vx_{k+1}'$ onto the unit sphere by $\vx_{k + 1} = \vx_{k+1}' / \| \vx_{k+1}' \|_2$.
\end{enumerate}
If the step size of the forward Euler method is $h = 1 / (1 + \gamma)$, then
\begin{align}
\vx'_{k + 1} &= \vx_k + \frac{1}{1 + \gamma}(\cmT \vx_k^{m-1} - \normof[2]{\vx_k} \vx_k)  
= \frac{1}{1 + \gamma}\left(\cmT\vx_k^{m-1} + \gamma\vx_k\right) 
\end{align}
since $\normof[2]{\vx_k} = 1$. The projection onto the unit sphere then gives the shifted
higher-order power method iterates in \cref{eqn:shifted_hopm}.

\section{Numerical examples}\label{sec:numerical}

We now show that our method works on two test tensors used in prior work.
\Cref{sec:ex_KM} shows that our approach can compute all eigenvalues
of a specific tensor, while the (shifted) higher-order power method cannot
compute all of the eigenvalues.
\Cref{sec:ex_CDN} verifies that our approach can compute all eigenvalues
of a tensor whose eigenvalues were found with semi-definite programming (SDP).
Finally, \cref{sec:scalability} shows that our
method is faster than SS-HOPM and the SDP method.

\newsavebox{\cmTbox}
\savebox{\cmTbox}{$\cmT \vx^m$}
\newsavebox{\cmTrq}
\savebox{\cmTrq}{$\vx_k^T \cmT \vx_k^{m-1}$}

\begin{figure}[t]
\setlength{\tabcolsep}{6pt}
\centering
\begin{tabular}{@{}llccccccc@{}}
  \toprule
  $\lambda$ & Type & S-HOPM & SS-HOPM & V1 & V2 & V3 & V4 & V5\\
  \midrule
  0.0180	&	Neg.\ stable	&	0	&	18	&	0	&	25	&	0	&	100	&	0	\\
  0.4306	&	Neg.\ stable	&	38	&	29	&	38	&	0	&	45	&	0	&	0	\\
  0.8730	&	Neg.\ stable	&	62	&	40	&	62	&	0	&	47	&	0	&	0	\\
  0.0006	&	Pos.\ stable	&	0	&	13	&	0	&	19	&	8	&	0	&	0	\\
  0.0018	&	Unstable	&	0	&	0	&	0	&	25	&	0	&	0	&	32	\\
  0.0033	&	Unstable	&	0	&	0	&	0	&	35	&	0	&	0	&	37	\\
  0.2294	&	Unstable	&	0	&	0	&	0	&	0	&	0	&	0	&	31	\\
  \bottomrule
\end{tabular} \\
\vskip10pt
    \includegraphics[width=\textwidth]{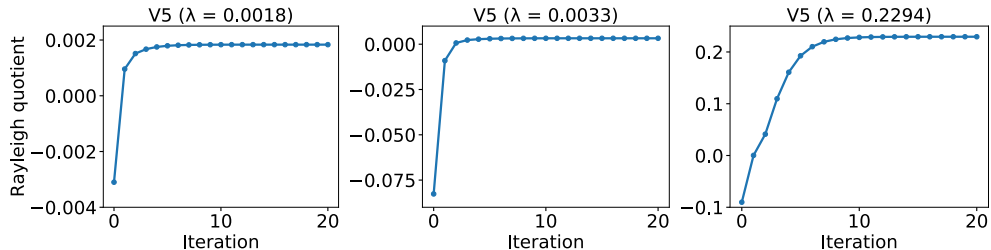}
    \vspace{-6mm}    
    \caption{\textbf{(Top)} The 7 eigenvalues of the test tensor from
      Kolda and Mayo~\cite[Example 3.6]{Kolda-2011-sshopm} and the number of 
  random trials (out of 100) that converge to the eigenvalue for
  (i) the symmetric higher-order power method; S-HOPM~\cite{DeLathauwer-2000-best,Regalia-2000-hopm,Kofidis-2002-best},
  (ii) the shifted symmetric higher-order power method; SS-HOPM~\cite{Kolda-2011-sshopm}), and 
  (iii) 5 variations of our dynamical systems approach.  
  V1 selects the largest magnitude eigenvalue,
  V2 selects the smallest magnitude eigenvalue, 
  V3 selects the largest algebraic eigenvalue, 
  V4 selects the smallest algebraic eigenvalue, and
  V5 selects the second smallest algebraic eigenvalue.
  Results for (i) and (ii) are from Kolda and Mayo~\cite{Kolda-2011-sshopm}.
  Our algorithm is the only one
  that is able to compute all of the eigenvalues, including
  those which are ``unstable,'' the eigenvectors to which SS-HOPM and S-HOPM cannot
  converge~\cite{Kolda-2011-sshopm}.
  \textbf{(Bottom)} Convergence plots for the three unstable eigenvalues from variation 5
  of our algorithm in terms of the Rayleigh quotient \usebox{\cmTrq},
  where $\vx_k$ is the $k$th iterate.
  }
\label{fig:dynsys-SSHOPM}
\end{figure}

\subsection{Example 3.6 from Kolda and Mayo~\cite{Kolda-2011-sshopm}}\label{sec:ex_KM}
Our first test case is a $3 \times 3 \times 3$
symmetric tensor from Kolda and Mayo~\cite[Example 3.6]{Kolda-2011-sshopm}:
\[
\cT_{:,:,1} = \bmat{
 -0.1281 &  \phantomneg0.0516  & -0.0954 \\
  \phantomneg0.0516 & -0.1958  & -0.179  \\
 -0.0954 & -0.179   & -0.2676
},\quad
\cT_{:,:,2} = \bmat{
  \phantomneg0.0516  & -0.1958  & -0.179  \\
 -0.1958   & \phantomneg0.3251  & \phantomneg0.2513 \\
 -0.179    & \phantomneg0.2513   & \phantomneg0.1773
}
\]
\[
\cT_{:,:,3} = \bmat{
 -0.0954 & -0.179   & -0.2676 \\
 -0.179   & \phantomneg0.2513  & \phantomneg0.1773 \\
 -0.2676  & \phantomneg0.1773 &  \phantomneg0.0338
}.
\]
The tensor has 7 eigenvalues, which Kolda and Mayo classify
as ``positive stable'', ``negative stable'', or ``unstable'' 
(see \cref{fig:dynsys-SSHOPM}, top), corresponding
to positive definiteness, negative definiteness, or indefiniteness
of the projected Hessian of the Lagrangian of their optimization function~\cite{Kolda-2011-sshopm}.
(Since the tensor has an odd number of modes, we only consider eigenvalues up to sign.)
Kolda and Mayo showed that their shifted symmetric higher-order power method
(SS-HOPM), a generalization of the symmetric higher-order power method 
(S-HOPM)~\cite{DeLathauwer-2000-best,Regalia-2000-hopm,Kofidis-2002-best},
only converges to eigenvectors of the positive or negative stable eigenvalues.
An adaptive version of SS-HOPM has the same shortcoming~\cite{Kolda-2014-GEAP}.
A recently proposed Newton iteration can converge to eigenpairs where the projected
Hessian has eigenvalues bounded away from $0$~\cite{Jaffe-2018-Newton}.

Of the 7 eigenpairs for the above tensor, 3 are unstable. 
Our dynamical systems approach can compute all 7 eigenpairs,
using 5 variations of the dynamical system:
\begin{enumerate}
\item $\Lambda$ maps $\mM$ to the eigenvector with largest magnitude eigenvalue;
\item $\Lambda$ maps $\mM$ to the eigenvector with smallest magnitude eigenvalue;
\item $\Lambda$ maps $\mM$ to the eigenvector with largest algebraic eigenvalue;
\item $\Lambda$ maps $\mM$ to the eigenvector with smallest algebraic eigenvalue; and
\item $\Lambda$ maps $\mM$ to the eigenvector with second smallest algebraic eigenvalue.
\end{enumerate}
We used the forward Euler method with step size set to 0.5 in order to compute the eigenvalues.
Empirically, convergence is fast, requiring fewer than 10 iterators
(\cref{fig:dynsys-SSHOPM}, bottom row).
One can also also compute these eigenvectors with 
semidefinite programming~\cite{Cui-2014-real},
although the scalability of such methods is limited (see \cref{sec:scalability}).
We next provide numerical results from a tensor in this literature.

\begin{figure}[tb]
\begin{minipage}[c]{0.7\textwidth}
\setlength{\tabcolsep}{6pt}
\centering
\begin{tabular}{@{}lcccccccc@{}}
  \toprule
  $\lambda$ & SDP & V1 & V2 & V3 & V4 & V5 \\
  \midrule 
  9.9779 & $\checkmark$  & 94  & 0     & 0     & 100 & 0 \\
  4.2876 & $\checkmark$  & 6    & 0     & 100 & 0     & 0 \\
  0.0000 & $\checkmark$  & 0    & 100 & 0     & 0     & 100 \\
  \bottomrule
\end{tabular} 
\vskip10pt
   \includegraphics[width=\linewidth]{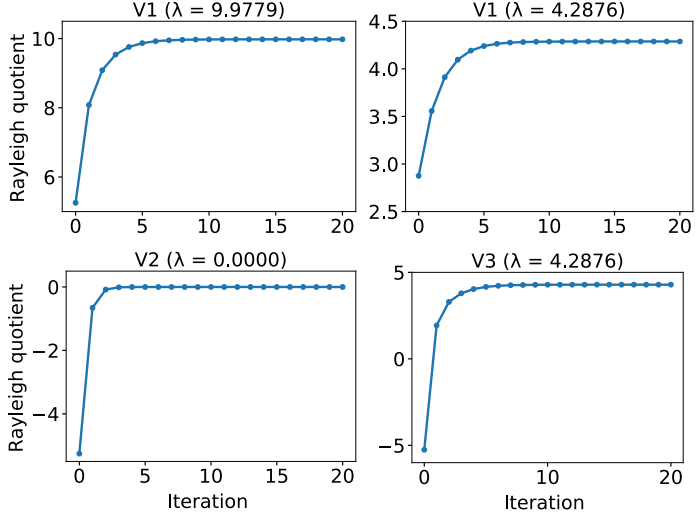}
\end{minipage}
\hfill
\begin{minipage}[c]{0.28\textwidth}
\vspace{-2mm}
\caption{\textbf{(Top)} The 3 eigenvalues of the test tensor from
  from Cui et al.~\cite[Example 4.11]{Cui-2014-real} and the number of 
  random trials (out of 100) that converge to the eigenvalue for
  5 variations of our dynamical systems approach.
  Variation 1 selects the largest magnitude eigenvalue,
  variation 2 selects the smallest magnitude eigenvalue, variation 3 selects the
  largest algebraic eigenvalue, variation 4 selects the smallest algebraic
  eigenvalue, variation 5 selects the 2nd smallest algebraic eigenvalue.
  Our algorithm is able to compute all of the eigenvalues,
  which the SDP approach is guaranteed to compute.
  \textbf{(Bottom)} Convergence plots for the three eigenvalues from different
  variations of our algorithm in terms of the Rayleigh quotient \usebox{\cmTrq},
  where $\vx_k$ is the $k$th iterate.
  }
\label{fig:dynsys-SDP}
\end{minipage}
\end{figure}

\subsection{Example 4.11 from Cui et al.~\cite{Cui-2014-real}}\label{sec:ex_CDN}
Our second test case is a $5 \times 5 \times 5$
symmetric tensor from Cui et al.~\cite[Example 4.11]{Cui-2014-real}:
\begin{align}\label{eq:ex411}
\cT_{i,j,k} = \frac{(-1)^i}{i} + \frac{(-1)^j}{j} + \frac{(-1)^k}{k},\;\; 1 \le i, j, k \le 5.
\end{align}
The tensor has 3 eigenvalues (again, the tensor has an odd number of modes, 
so the eigenvalues are only defined up to sign).
We use the same 5 variations of our algorithm to compute the eigenpairs (\cref{fig:dynsys-SDP}).
Again, we are able to compute all of the eigenvalues of the tensor,
and convergence is rapid.

\subsection{Scalability experiments}\label{sec:scalability}
Now we compare the performance of our algorithm to both SS-HOPM
(as implemented in the Tensor Toolbox for MATLAB\footnote{\url{https://www.tensortoolbox.org/}}~\cite{TTB_Dense,TTB_Software}%
)
and the SDP method 
(also implemented in MATLAB%
\footnote{\url{http://www.math.ucsd.edu/~njw/CODES/reigsymtensor/areigstsrweb.html}}). 
Our implementation is written in Julia and is also publicly available.%
\footnote{\url{https://github.com/arbenson/TZE-dynsys}}

\begin{figure}[tb]
\includegraphics[width=\textwidth]{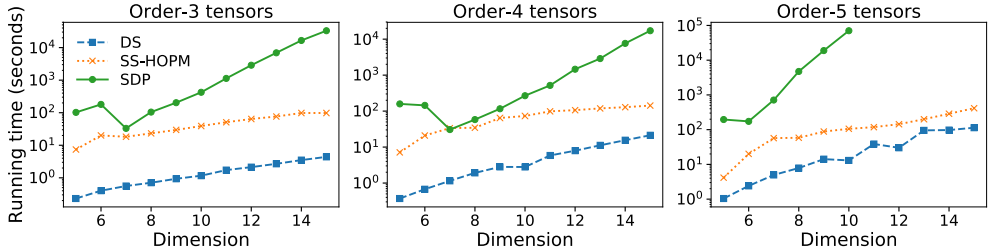}
\vspace{-7mm}
\caption{Running time for 
SS-HOPM~\cite{Kolda-2011-sshopm}, 
the SDP method~\cite{Cui-2014-real},
and our dynamical system method on the test tensor in \cref{eq:test_tensor}.
While the SDP method does not scale, 
the approach has guarantees on the eigenvalues it can compute. 
Our dynamical systems approach uses the map for the eigenvectors of the
$k$th largest algebraic and magnitude (for $k = 1, \ldots, n$), 
along with the forward Euler numerical integration scheme 
with step size 0.5.
We terminated the SDP method if it did not complete within 24 hours,
so only results for dimension up to 10 appear in the plot on the far right.}
\label{fig:scalability}
\end{figure}

The following order-$m$, $n$-dimensional tensor (which is a generalization of \cref{eq:ex411}) serves as the test case for our experiments:
\begin{align}\label{eq:test_tensor}
\cT_{i_1,\ldots,i_m} = \sum_{r=1}^{m} \frac{(-1)^r}{r},\; 1 \le i_1,\ldots,i_m \le n.
\end{align}
With SS-HOPM, we use a tolerance of $10^{-6}$, a shift of 1, and $100n$ random initializations.
With the SDP method, we use the default parameter settings.
With our dynamical systems method, we use a stopping tolerance of $10^{-6}$,
the forward Euler integration scheme with step size = 0.5, and
maps $\Lambda$ corresponding to $k$th largest
algebraic and magnitude eigenvalue, $k = 1, \ldots,n$, each with 50 trials
of random initial starting points. 
With this setup, SS-HOPM and our approach use the same number of randomly initialized trials.
We performed all experiments on a 3.1 GHz Intel Core i7 MacBook Pro with 16 GB of RAM.

\Cref{fig:scalability} shows the running times of the algorithms
for $m = 3,4,5$ and $n = 5,6,\ldots,15$.
The main takeaway is that the SDP method is much slower than the
other two methods---this is the price we pay for being able to compute
all of the real eigenvalues and dealing with NP-hardness.
Our dynamical systems approach is faster than SS-HOPM,
which is somewhat surprising since we require an eigendecomposition
of an $n \times n$ matrix at each iteration.
However, the performance difference is a result of rapid convergence,
as observed in \cref{fig:dynsys-SSHOPM,fig:dynsys-SDP}.
Finally, although the tensors here are relatively small, our method
has been used in recent work to compute eigenvectors of tensors
of order 3, 4, and 5 with dimensions in the tens of thousands~\cite{Benson-2018-hypergraphs}.

\begin{figure}[tb]
\centering
\includegraphics[width=0.495\textwidth]{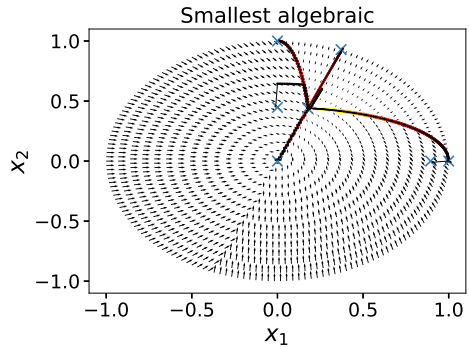}
\vspace{-5mm}
\caption{%
Vector field of dynamical systems from \cref{eqn:dynsys}, along with trajectories of the dynamical system, 
akin to \cref{fig:dynamics_normalized} (right).
However, here we normalize the vector to have unit 2-norm after each step of numerical integration of the dynamical system.
This helps converge to the attractor, where the smallest algebraic eigenvalue map is undefined. A better understanding of
why this helps is an avenue for future research.
}
\label{fig:dynamics_normalized}
\end{figure}

\section{Stochastics as a guide}

Scalable methods for computing tensor eigenvectors remain a challenge. Our new
framework for computing $Z$-eigenvectors offers insights through three
observations. First, a tensor $Z$-eigenvector is a matrix eigenvector of
\emph{some} matrix, where the matrix is obtained by applying the tensor collapse
operator with the $Z$-eigenvector itself. Second, for a certain class of tensors
where eigenvectors have a stochastic interpretation, the dynamical system in
\cref{eqn:dynsys} is the one that governs the long-term dynamics of the
stochastic process. Third, the same type of dynamical system seems to work for
more general tensors. This framework can compute tensor eigenvectors that other
scalable methods, such as the shifted higher-order power method (SS-HOPM),
cannot.

The dynamical system framework is a flexible setup to create solvers for tensor
eigenvector problems, and dynamical systems have also been used in matrix
eigenvector problems~\cite{Chu-1984-Toda,Golub-2006-continuous}. The difference
between SS-HOPM and our proposed framework, for instance, is essentially that
SS-HOPM takes a single step of the power method on the matrix $\cmT[\vx]$,
whereas we converge to an eigenvector of $\cmT[\vx]$. There is a rich space to
interpolate between these positions. Straightforward ideas include low-degree
polynomial filters that target specific eigenvectors.

Indeed, one major challenge is knowing what map $\Lambda$ to choose---different
choices lead to different eigenvectors and there is no immediate relationship
between them for general tensors. A second class of open questions relates to
convergence theory. At the moment, we have demonstrated that there are both
convergent and non-convergent cases (\cref{fig:dynamics}). We can alleviate
this problem by normalizing the iterates of the integration scheme to have unit 2-norm
after each iteration (\cref{fig:dynamics_normalized}); however, we do not have a good
theory for why this works. Finally, our method is not immediately
applicable to $H$-eigenvectors because \cref{obs:main} no longer holds. Adapting
our methodology to this class of eigenvectors is an area for future research.

Our framework came from relating tensor eigenvectors to stochastic processes.
This is quite different from the core ideas in the tensor literature, which are
firmly rooted in algebraic generalizations. We hope that these results encourage
further development of the relationships between stochastics and tensor
problems.

\section*{Acknowledgments}
We thank Brad Nelson for providing valuable feedback.
ARB is supported in part by NSF award DMS-1830274 and ARO award W911NF-19-1-0057.
DFG is supported by NSF award CCF-1149756, IIS-1422918, IIS-1546488, the
NSF Center for Science of Information STC, CCF-0939370, DARPA SIMPLEX, NASA,
DOE DE-SC0014543, and the Sloan Foundation.

\bibliographystyle{abbrv}
\bibliography{refs}
\end{document}